\newtheorem{thm}{Theorem}
\newtheorem{lemma}{Lemma}
\theoremstyle{definition}
\newcommand{\set}[1]{\left\{#1\right\}}
\newcommand{\norm}[1]{\left\Vert#1\right\Vert}
\newcommand{\cA}{\mathcal{A}}
\newcommand{\cB}{{\mathcal{B}}}
\newcommand{\cK}{{\mathcal{K}}}
\newcommand{\cG}{{\mathcal{G}}}
\newcommand{\cF}{{\mathcal{F}}}
\newcommand{\cH}{{\mathcal{H}}}
\newcommand{\cP}{{\mathcal{P}}}
\newcommand{\cX}{{\mathcal{X}}}
\newcommand{\cT}{{\mathcal{T}}}
\newcommand{\dC}{{\mathbb{C}}}
\newcommand{\cY}{{\mathcal{Y}}}
\newcommand{\dN}{{\mathbb{N}}}
\newcommand{\dT}{{\mathbb{T}}}
\newcommand{\dZ}{{\mathbb{Z}}}
\newcommand{\Ran}{\mathrm{Ran}}
\newcommand{\Ker}{\mathrm{Ker}}
\newcommand{\ov}{\overline}
\begin{document}
\title{A noncommutative version of the Fej\'er-Riesz theorem}

\author{Yuri\u\i{} Savchuk}
\address{Mathematisches Institut, Universit\"at Leipzig, Johannisgasse 26, 04103 Leipzig, Germany}
\email{savchuk@math.uni-leipzig.de}

\author{Konrad Schm\"udgen}
\address{Mathematisches Institut, Universit\"at Leipzig, Johannisgasse 26, 04103 Leipzig, Germany}
\email{schmuedgen@math.uni-leipzig.de}

\subjclass[2010]{Primary 14A22, 47A68. Secondary 42A05}


\keywords{Fej\'er-Riesz theorem, noncommutative Positivstellensatz, Toeplitz algebra.}

\begin{abstract}
Let $\cX$ be the unital $*$-algebra generated by the unilateral shift operator. It is shown that for any nonnegative operator $X\in \cX$ there is an element $Y\in \cX$ such that $X=Y^*Y$.
\end{abstract}

\maketitle

\section{Introduction and Main Result}
Let $\cP$ denote the $*$-algebra of complex Laurent polynomials $p(z,z^{-1})\\=\sum_{k=-n}^{n} a_k z^k$ with involution $p \to \overline{p}(z):= \sum_{k=-n}^{n} \ov{a}_k z^{-k},$ where $n\in\dN_0.$ Setting $z=e^{it}$ with $t \in [0,2\pi]$ we see that $\cP$ is isomorphic to the $*$-al\-geb\-ra of all trigonometric polynomials. There is a faithful $*$-re\-pre\-sen\-ta\-ti\-on $\pi$ of the $*$-algebra $\cP$ on the Hilbert space $l^2(\dZ)$ such that $\pi(z)=U$, where $U$ is the bilateral shift operator.

The classical Fej\'er-Riesz theorem states that if a polynomial $p \in \cP$ takes only nonnegative values on the unit circle $\dT=\{z\in \dC:|z|{=}1\}$ or equivalently if the operator $\pi(p)$ on $l^2(\dZ)$ is nonnegative, then $p$ is of the form $p=\ov{q}\cdot{q}$ for some $q \in \cP$. Further, if $p$ has degree $d\in \dN_0$, then $q$ can be chosen to be an analytic polynomial $q(z)=\sum_{k=0}^d b_{k}z^{k}$ of degree $d$ such that $q(z)\not= 0$ for $|z|<1$ and $q(0)>0.$ The latter conditions determine the polynomial $q$ uniquely.
A simple proof of this theorem 
can be found in \cite{sz}, see Theorems 1.2.1 and 1.2.2 therein.

The aim of this paper is to prove an analog of the Fej\'er-Riesz theorem if $\cP$ is replaced by the unital $*$-algebra $$\cA=\dC\langle s,s^*\ |\ s^*s=1\rangle$$ and the bilateral shift $U$ is replaced by the unilateral shift
\begin{align}\label{unlat}
S(\varphi_{0},\varphi_1,\varphi_{2},\dots)=(0,\varphi_{0},\varphi_{1},\varphi_2,\dots)
\end{align}
on the Hilbert space $l^2(\dN_0)$. Let $\pi_0$ denote the $*$-representation of $\cA$ on $l^2(\dN_0)$ determined by $\pi_0(s)=S$.
Our main result is the following 
\begin{thm}
For any element $x=x^*\in\cA$ the following statements are equivalent:\\
(i) $x=y^*y$ for some $y\in\cA.$\\
(ii) $\pi(x)\geq 0$ for any $*$-representation of the $*$-algebra $\cA$ on a Hilbert space.\\
(iii) $\pi_0(x)\geq 0$ on the Hilbert space $l^2(\dN_0)$.\\
If this holds, then $y$ can be chosen such that the matrix of $\pi_0(y)$ with respect to the standard base of  $l^2(\dN_0)$ is lower-triangular.
\end{thm}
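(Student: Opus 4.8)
The implications (i)$\Rightarrow$(ii)$\Rightarrow$(iii) are immediate (apply any $*$-representation to the identity $x=y^*y$), and they will also close the cycle of equivalences, so the plan is to establish (iii)$\Rightarrow$(i) together with the lower-triangularity assertion. First I would record some bookkeeping for $\cA$: using $s^*s=1$, every word reduces to the form $s^m(s^*)^n$, so $\cA=\mathrm{span}\{s^m(s^*)^n:m,n\ge0\}$, and the symbol map $\sigma\colon\cA\to\cP$ given by $\sigma(s)=z$, $\sigma(s^*)=z^{-1}$ is a surjective $*$-homomorphism whose kernel $\cK$ is spanned by the elements $s^m(1-ss^*)(s^*)^n$; since $\pi_0$ carries these bijectively to the matrix units, $\pi_0(\cK)$ is the $*$-algebra of all finitely supported matrices and $\pi_0$ is injective on $\cK$. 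Consequently, for $x=x^*\in\cA$ one may write $\pi_0(x)=T_p+F$, where $p:=\sigma(x)$ is a real trigonometric polynomial, $T_p$ is the Toeplitz operator on $l^2(\dN_0)$ with symbol $p$, and $F=F^*$ is supported in the first $N$ rows and columns for a suitable $N\ge\deg p$.

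Assuming now that $\pi_0(x)\ge0$, I would first pass to the symbol. One checks on basis vectors that $S^{*n}T_pS^n=T_p$, and $\|S^{*n}FS^n\|\to0$ because $F$ has finite rank, so $S^{*n}\pi_0(x)S^n\to T_p$ in operator norm; hence $T_p\ge0$, and then $p\ge0$ on $\dT$ since $\langle T_pg,g\rangle=\int_{\dT}p\,|g|^2$ for polynomials $g$ and $|g|^2$ may be taken close to any point mass. The classical Fej\'er--Riesz theorem now yields an analytic polynomial $q$ with $q(0)>0$ and $q(z)\ne0$ for $|z|<1$ such that $p=\ov q\,q$; therefore $T_p=T_q^*T_q$, the matrix of $T_q$ in the standard basis is lower-triangular, and, crucially, $q$ is outer, so $T_q$ has dense range in $l^2(\dN_0)$.

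Next I would split $l^2(\dN_0)=\cH_1\oplus\cH_2$ with $\cH_1=\mathrm{span}\{e_0,\dots,e_{N-1}\}$ and $\cH_2\cong l^2(\dN_0)$, and write $A:=\pi_0(x)$ in $2\times2$ block form. Because the matrix of $T_q$ is lower-triangular, its $(1,2)$-block vanishes; computing the blocks of $T_p=T_q^*T_q$ and using that $F$ is supported in $\cH_1$, one gets $A_{22}=T_q^*T_q$ on $\cH_2$ and $A_{12}=C^*T_q$, where $C$ is the $(2,1)$-block of $T_q$, while only $A_{11}$ (an $N\times N$ self-adjoint matrix) is affected by $F$. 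For $\xi\in\cH_1$ and $\eta\in\cH_2$ we have $0\le\langle A(\xi\oplus\eta),\xi\oplus\eta\rangle=\langle A_{11}\xi,\xi\rangle+2\,\mathrm{Re}\,\langle T_q\eta,C\xi\rangle+\|T_q\eta\|^2$; letting $T_q\eta\to-C\xi$ (legitimate since $q$ is outer, so $T_q$ has dense range) gives $\langle A_{11}\xi,\xi\rangle\ge\|C\xi\|^2$, that is, $M:=A_{11}-C^*C\ge0$. This $M$ is the Schur complement of $\bigl(\begin{smallmatrix}A_{11}&C^*\\C&I\end{smallmatrix}\bigr)$ at the invertible corner $I$, and the approximation argument is exactly what absorbs the case in which $p$ vanishes somewhere on $\dT$ and $T_p$ is not invertible; I expect this step to be the main obstacle.

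Finally I would factor $M=B_{11}^*B_{11}$ with $B_{11}$ an $N\times N$ lower-triangular matrix (a Cholesky factorization in the reversed order of the basis vectors, or a $QL$-factorization of $M^{1/2}$) and set $B:=\bigl(\begin{smallmatrix}B_{11}&0\\C&T_q\end{smallmatrix}\bigr)$ relative to $\cH_1\oplus\cH_2$. A direct block multiplication gives $B^*B=A$; moreover $B$ is lower-triangular in the standard basis and $B-T_q$ is finitely supported, so $B=\pi_0(y)$ for some $y=q(s)+g$ with $g\in\cK$. Since $\sigma(y^*y)=\ov q\,q=p=\sigma(x)$ we have $y^*y-x\in\cK$, whereas $\pi_0(y^*y)=B^*B=A=\pi_0(x)$; injectivity of $\pi_0$ on $\cK$ then forces $y^*y=x$, with $\pi_0(y)=B$ lower-triangular. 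This proves (iii)$\Rightarrow$(i) with the required form of $y$, and together with (i)$\Rightarrow$(ii)$\Rightarrow$(iii) it completes the theorem.
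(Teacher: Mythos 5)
Your argument is correct and follows essentially the same route as the paper: reduce to $\pi_0(x)=T_p+F$, deduce $p\ge 0$ on $\dT$, apply Fej\'er--Riesz to get an outer $q$ with $T_q$ of dense range, prove the Schur-complement inequality $A_{11}\ge C^*C$ by the same density/completion-of-squares argument (the paper's Lemma 3 with $P_W=I$), and finish with a reversed (UL) Cholesky factorization of the finite block and injectivity of $\pi_0$. The only point to patch is the degenerate case $p\equiv 0$, where $q=0$ and the dense-range step is vacuous; there $C=0$ and $A_{11}\ge 0$ holds directly, exactly the trivial case the paper treats separately.
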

The implications (i)$\to$ (ii)$\to$ (iii) are trivial, so it remains to prove that (iii) implies (i). This will be done in the next section. 

If $\cH$ and $\cK$ are Hilbert spaces, $\cB(\cH,\cK)$ are the bounded operators from $\cH$ into $\cK$ and $\cB(\cH){:=}\cB(\cH,\cH).$ For $x,y \in \cH,\ x\otimes y$ denotes the rank one operator $\langle\cdot, x\rangle y$. By a $*$-representation of a unital $*$-algebra on $\cH$ we mean a unit preserving $*$-homomorphism into the $*$-algebra $\cB(\cH)$.

\section{Proof of the Main Implication}
For the main proof we need three simple lemmas. The second lemma is a well-known fact on outer analytic polynomials, while the third is the crucial factorization lemma. To make the exposition in this section as elementary as possible we include complete proofs.

We  identify the Hilbert spaces $l^2(\dN_0)$ and $H^2(\dT)$ in the obvious way by identifying their standard orthonormal bases $\set{e_k;k \in \dN_0}$ and $\set{z^k;\ k\in\dN_0}.$ Let $\cT_{p}$ denote the set of all Toeplitz operators $T_p$ on $H^2(\dT)$ with symbol $p \in \cP$ and $\cF$ the set of all bounded operators on $l^2(\dN_0)$ which have finite matrices with respect to the base $\set{e_k}.$ That is, $F\in\cB(l^2(\dN_0))$ is in $\cF$ if and only if there exists a natural number $k$ such that $\langle Fe_i,e_j\rangle=0$ if $i>k$ or $j>k.$

Set $\cX:=\pi_0(\cA).$
\begin{lemma}\label{lemma_atf}
$\cX=\cT_p+\cF.$ 
\end{lemma}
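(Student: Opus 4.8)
The plan is to reduce everything to the monomials $S^k(S^*)^m$. Since $\pi_0(s)=S$ and $\pi_0(s^*)=S^*$, the space $\cX=\pi_0(\cA)$ is spanned by the operators $\pi_0(w)$ with $w$ a word in the letters $s,s^*$, so the first step is the normal-form observation that, using only the relation $s^*s=1$, every such word equals $s^k(s^*)^m$ for suitable $k,m\in\dN_0$. (Scanning a word from the left, each block $(s^*)^bs^a$ collapses to $s^{a-b}$ if $a\ge b$ and to $(s^*)^{b-a}$ if $a<b$; after finitely many such reductions all factors $s$ precede all factors $s^*$.) This gives
\begin{equation*}
\cX=\mathrm{span}\set{S^k(S^*)^m:\ k,m\in\dN_0},
\end{equation*}
and I expect this to be the only genuinely substantive point of the argument.

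Next one checks the easy inclusion $\cT_p+\cF\subseteq\cX$. Since $T_{z^n}=S^n$ and $T_{z^{-n}}=(S^*)^n$ for $n\in\dN_0$, linearity of the symbol map $p\mapsto T_p$ gives $\cT_p\subseteq\cX$; and since $I-SS^*=e_0\otimes e_0\in\cX$ and $S^k(e_0\otimes e_0)(S^*)^l=e_l\otimes e_k$, and these rank one operators span $\cF$, also $\cF\subseteq\cX$.

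For the reverse inclusion I would use the factorization identity
\begin{equation*}
S^m(S^*)^m=I-P_m\qquad(m\in\dN_0),
\end{equation*}
where $P_m$ is the orthogonal projection onto $\mathrm{span}\set{e_0,\dots,e_{m-1}}$, so that $P_m\in\cF$ and $P_0=0$; this follows by an easy induction on $m$ from $SS^*=I-e_0\otimes e_0$. Then, distinguishing the cases $k\ge m$ and $k<m$, one writes
\begin{equation*}
S^k(S^*)^m=S^{k-m}(I-P_m)=T_{z^{k-m}}-S^{k-m}P_m\qquad(k\ge m)
\end{equation*}
and
\begin{equation*}
S^k(S^*)^m=(I-P_k)(S^*)^{m-k}=T_{z^{k-m}}-P_k(S^*)^{m-k}\qquad(k<m),
\end{equation*}
in both cases a Toeplitz operator with monomial symbol minus an operator in $\cF$. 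Since $\cT_p+\cF$ is a linear subspace of $\cB(l^2(\dN_0))$, this gives $\cX\subseteq\cT_p+\cF$, and together with the previous paragraph, $\cX=\cT_p+\cF$.

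As indicated, the main obstacle is the normal-form claim (equivalently, that the monomials $S^k(S^*)^m$ span $\cX$); once that and the identity $S^m(S^*)^m=I-P_m$ are in hand, the case analysis is purely mechanical.
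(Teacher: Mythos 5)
Your proof is correct. The inclusion $\cT_p+\cF\subseteq\cX$ is handled essentially as in the paper (powers of $S$ and $S^*$ give the Toeplitz operators with monomial symbols, and the rank one operators $e_l\otimes e_k=S^k(I-SS^*)(S^*)^l$ span $\cF$), but for the reverse inclusion you take a genuinely different, more explicit route: you first reduce to the normal form modulo $s^*s=1$, so that $\cX$ is spanned by the monomials $S^k(S^*)^m$, and then exhibit each monomial as a Toeplitz operator minus a finite matrix via the identity $S^m(S^*)^m=I-P_m$, namely $S^k(S^*)^m=T_{z^{k-m}}-S^{k-m}P_m$ for $k\ge m$ and $S^k(S^*)^m=T_{z^{k-m}}-P_k(S^*)^{m-k}$ for $k<m$; since $\cT_p+\cF$ is a linear subspace, this yields $\cX\subseteq\cT_p+\cF$. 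The paper avoids the normal form entirely: it verifies the relations $ST_{z^n}=T_{z^{n+1}}$, $S^*T_{z^n}=T_{z^{n-1}}$, $ST_{z^{-n}}=T_{z^{1-n}}-e_{n-1}\otimes e_0$ and $S^*T_{z^{-n}}=T_{z^{-n-1}}$, deduces that $\cY=\cT_p+\cF$ is stable under left multiplication by $S$ and $S^*$ (hence under all of $\cX$, which is generated by these two operators), and concludes $\cX\subseteq\cY$. Your argument buys explicit formulas for a spanning set of $\cX$, which makes the finite-band matrix picture of elements of $\cX$ completely transparent, at the price of the word-reduction bookkeeping; the paper's invariance argument is slightly leaner because it only needs stability of $\cY$ under the two generators. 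Both of your key steps, the normal-form reduction under $s^*s=1$ and the identity $S^m(S^*)^m=I-P_m$, are correctly justified, so there is no gap.
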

\begin{proof}
Put $\cY:= \cT_p +\cF$. 
Since $\pi_0(z^n){=}\pi_0(z)^n {=}S^n=T_{z^n}$ and\\ $\pi_0(z^{-n}){=}\pi_0((z^n)^*)){=}(T_{z^n})^*{=}T_{z^{-n}}$ for $n \in \dN_0$, we have $\cT_{p} \subseteq \cX$. From the relations
$\pi_0(s^n(1-ss^*)s^{*k})=S^n(I-SS^*)S^{*k}= e_k \otimes e_n$, $k, n \in \dN_0$, we conclude that $\cF \subseteq \cX$. Thus, $\cY=\cT_{p} +\cF \subseteq \cX$.

Now we prove the converse inclusion $\cX \subseteq \cY$. For $n \in \dN$ we have $ST_{z^n}=T_{z^{n+1}}$, $S^*T_{z^n}=T_{z^{n-1}}$, $ST_{z^{-n}}=T_{z^{1-n}} - e_{n-1}\otimes e_0$, and $S^*T_{z^{-n}}=T_{z^{-n-1}}$. These relations imply that $S\cdot \cT_p \subseteq \cY$ and $S^* \cdot \cT_p \subseteq \cY$. Since obviously $S\cdot \cF \subseteq \cF$ and $S^* \cdot \cF \subseteq \cF$, we obtain $S\cdot \cY \subseteq \cY$ and $S^*\cdot \cY \subseteq \cY$. Because $\cX$ is generated as an algebra by $S=\pi_0(z)$ and $S^*=\pi_0(z^{-1})$, the latter yields $\cX\cdot \cY\subseteq \cY$, so  $\cX \subseteq \cY$. Consequently, $\cX=\cY.$
\end{proof} 
\begin{lemma}\label{randense}
Let $q(z)=\sum_{k=0}^d b_kz^k$ be an analytic polynomial such that $q(z)\not= 0$ for $|z|<1$. Then $\Ran T_q =q(z)H^2(\dT)$ is dense in $H^2(\dT)$.
\end{lemma}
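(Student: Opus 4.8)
The plan is to exploit that, $q$ being an analytic polynomial, $T_q$ acts simply as multiplication by $q$: for $f\in H^2(\dT)$ the product $qf$ is again in $H^2(\dT)$ (it is a finite linear combination of shifts of $f$, hence has square-summable Taylor coefficients), so $T_qf=P_+(qf)=qf$ and therefore $\Ran T_q=q(z)H^2(\dT)$. This already gives the asserted identity, and it remains to prove density of $q(z)H^2(\dT)$ in $H^2(\dT)$. It is worth keeping in mind that one should \emph{not} try to prove surjectivity here: if $q$ has degree $d$, then $T_q$ has Fredholm index $-d$ and its range is a proper subspace. So density, not closed range, is exactly what is at stake.

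Next I would factor $q(z)=b_d\prod_{j=1}^{d}(z-\alpha_j)$ with $b_d\neq 0$. The hypothesis that $q(z)\neq 0$ for $|z|<1$ says precisely that $|\alpha_j|\geq 1$ for every $j$ (zeros on the unit circle are allowed). Since all the symbols $z-\alpha_j$ are analytic, the product rule $T_fT_g=T_{fg}$ for analytic $f,g$ applies and gives $T_q=b_d\,T_{z-\alpha_1}\cdots T_{z-\alpha_d}$. A composition of bounded operators with dense range again has dense range (by continuity of the outer factor one gets $\overline{\Ran(AB)}\supseteq\overline{A(\overline{\Ran B})}=\overline{\Ran A}$), so it suffices to treat a single linear factor: it is enough to show that $(z-\alpha)H^2(\dT)$ is dense whenever $|\alpha|\geq 1$.

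For the linear factor I would argue by orthogonality. Suppose $g=\sum_{n\geq 0}c_nz^n\in H^2(\dT)$ is orthogonal to $(z-\alpha)z^k$ for all $k\in\dN_0$. Expanding the inner product in the orthonormal basis $\set{z^k}$ yields $\ov{c_{k+1}}-\alpha\,\ov{c_k}=0$, i.e.\ $c_{k+1}=\ov{\alpha}\,c_k$, whence $c_k=\ov{\alpha}^{\,k}c_0$ and $\sum_{k}|c_k|^2=|c_0|^2\sum_k|\alpha|^{2k}$. As $|\alpha|\geq 1$, the right-hand side is finite only for $c_0=0$, forcing $g=0$. Equivalently $\Ker T_{z-\alpha}^{*}=\set{0}$, so $(z-\alpha)H^2(\dT)$ is dense, and combining this with the composition step finishes the proof.

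The step needing the most care is the reduction: one must make sure the hypothesis is used in the sharp form $|\alpha_j|\geq 1$, and that the composition-of-dense-ranges fact is invoked in the correct order. Beyond that the argument is just the elementary recurrence above, so I do not anticipate a genuine obstacle.
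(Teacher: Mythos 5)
Your proof is correct and essentially coincides with the paper's: both reduce the problem to a single linear factor $z-\alpha$ with $|\alpha|\geq 1$ (the paper by induction on the degree, you by factoring $q$ completely and invoking the analytic product rule plus the composition-of-dense-ranges fact) and then verify $\Ker(S^*-\ov{\alpha}I)=\set{0}$ via the same geometric recurrence in $l^2$. The only inaccuracy is the parenthetical Fredholm-index aside, which would additionally require $q$ to be zero-free on $\dT$; it plays no role in the argument.
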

\begin{proof}
We proceed by induction on the degree of $q$. Suppose that the assertion holds when $\deg q\leq d$. Let $q_0(z)=\sum_{k=0}^{d{+}1} b_kz^k$ be as above and $\deg q_0={d{+}1}.$ We write $q_0(z)=(z{-}\lambda)q(z).$ Then $q(z)$ satisfies also the assumptions, so $qH^2(\dT)$ is dense by the induction hypothesis. Therefore it suffices to show that $\Ran(S{-}\lambda I)=(z{-}\lambda)H^2(\dT)$ is dense or equivalently that $\Ker(S^*{-}\ov{\lambda} I)=\{0\}$. For let $\varphi{=}(\varphi_n)\in\Ker(S^*{-}\ov{\lambda} I)$. Then we have $\varphi_n-\ov{\lambda}\varphi_{n{-}1}=0$ and hence $\varphi_n=\ov{\lambda}^n\varphi_0$ for $n\in \dN$. Since $q_0(z)\not= 0$ for $|z|<1$, $|\lambda|\geq 1$ and hence $\varphi=0$, because $\varphi \in l^2(\dN_0)$.
\end{proof}
\begin{lemma}\label{lemma_ABC}
Let $\cH$ and $\cK$ be Hilbert spaces, $A \in \cB(\cH)$, $W\in\cB(\cK)$, and $V\in\cB(\cH,\cK)$. Let $P_W$ denote the orthogonal projection of $\cK$ onto the closure of $\Ran W$. Suppose that the block matrix
\begin{gather*}
\left(\begin{array}{ll}
\ \ A & V^*W \\ 
W^*V & W^*W
\end{array}\right)
\end{gather*}
defines a nonnegative operator on $\cH\oplus \cK$. Then we have $A\geq V^*P_W V.$\\ For any $U\in\cB(\cH)$ such that $A-V^*P_W V=U^*U$, we have
\begin{gather}\label{factorau}
\left(\begin{array}{ll}
\ \ A & V^*W \\ 
W^*V & W^*W
\end{array}\right)=
\left(\begin{array}{ll}
\ \ U & 0 \\ 
P_W V & W
\end{array}\right)^*
\left(\begin{array}{ll}
\ \ U & 0 \\ 
P_W V & W
\end{array}\right).
\end{gather}

\end{lemma}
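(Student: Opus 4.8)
The plan is to verify both assertions by elementary Hilbert space manipulations; the only subtlety is that $\Ran W$ need not be closed, so that $P_WVh$ is approximated, not attained, by elements of the form $Wk$.

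For the inequality, write $M$ for the block matrix and expand the hypothesis $M\geq 0$ on vectors $h\oplus k$ with $h\in\cH$, $k\in\cK$:
\[
\langle Ah,h\rangle + 2\,\mathrm{Re}\,\langle Wk,Vh\rangle + \norm{Wk}^2 \geq 0 .
\]
Taking $k=0$ shows $\langle Ah,h\rangle\geq 0$ for all $h$, hence $A=A^*$ on the complex Hilbert space $\cH$, so $A-V^*P_WV$ is self-adjoint and it suffices to check that its quadratic form is nonnegative. Fix $h$ and choose $k_n\in\cK$ with $Wk_n\to -P_WVh$, which is possible since $P_WVh$ lies in the closure of $\Ran W$. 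Using $\langle P_WVh,Vh\rangle=\langle P_W^2Vh,Vh\rangle=\norm{P_WVh}^2$ (as $P_W=P_W^*=P_W^2$), passing to the limit in the displayed inequality gives $\langle Ah,h\rangle-\norm{P_WVh}^2\geq 0$, that is, $\langle Ah,h\rangle\geq\langle V^*P_WVh,h\rangle$. Since $h$ was arbitrary, $A\geq V^*P_WV$.

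For the factorization \eqref{factorau}, fix $U\in\cB(\cH)$ with $A-V^*P_WV=U^*U$ and multiply out the right-hand side. The $(2,2)$ entry is $W^*W$; the off-diagonal entries are $V^*P_W^*W$ and $W^*P_WV$, which equal $V^*W$ and $W^*V$ because $P_WW=W$ and $P_W^*=P_W$; and the $(1,1)$ entry is $U^*U+V^*P_W^*P_WV=U^*U+V^*P_WV=A$, using $P_W^*P_W=P_W$. This yields \eqref{factorau}. The computations are all routine once the projection identities $P_W^*=P_W$, $P_W^2=P_W$, $P_WW=W$ are in hand; the only step needing care is the limiting argument above, which is forced by the possible non-closedness of $\Ran W$.
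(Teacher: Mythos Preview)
Your proof is correct and follows essentially the same approach as the paper's: both approximate $P_WVh$ by elements $Wk_n$ of $\Ran W$ and pass to the limit in the positivity inequality, and both verify the factorization by direct multiplication. Your version is in fact slightly more streamlined, since substituting $Wk_n\to -P_WVh$ directly gives $\langle Ah,h\rangle-\norm{P_WVh}^2\geq 0$ without the discriminant-type step and the separate case $P_WVh=0$ that the paper's argument requires.
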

\begin{proof}
Fix $\varphi\in\cH$ and let $\psi\in\cK$. Since the block matrix is nonnegative, for all $\lambda \in \dC$ we have the inequality 
\begin{gather*}
\left\langle\left(\begin{array}{ll}
\ \ A & W^*V \\ 
V^*W & W^*W
\end{array}\right)\left(\begin{array}{l}\ \varphi \\ \lambda\psi \end{array}\right),\left(\begin{array}{l}\ \varphi \\ \lambda\psi \end{array}\right)\right\rangle\geq 0
\end{gather*}
which can be written as 
\begin{gather}\label{alambda}
\langle A\varphi,\varphi\rangle+\lambda\langle V^*W\psi,\varphi\rangle+\overline{\lambda}\langle W^*V\varphi,\psi\rangle+\lambda\overline{\lambda}\langle W^*W\psi,\psi\rangle\geq 0. 
\end{gather}
Since the latter inequality holds for {\it arbitrary} $\lambda\in\dC$, we conclude that
\begin{gather*}
\langle A\varphi,\varphi\rangle\langle W^*W\psi,\psi\rangle=\langle A\varphi,\varphi\rangle \norm{ W\psi}^2\geq|\langle V^*W\psi,\varphi\rangle|^2=|\langle W\psi,V\varphi\rangle|^2, 
\end{gather*}
By the definition of $P_W$, there is a sequence $\psi_n\in\cK$, $n \in\dN$, such that $W\psi_n \to P_W V\varphi$. Setting $\psi=\psi_n$ in the preceding inequality and passing to the limit we obtain 
\begin{gather*}
\langle A\varphi,\varphi\rangle\norm{P_WV \varphi}^2\geq | \langle P_W V\varphi,V\varphi\rangle|^2=\norm{ P_W V\varphi}^4.
\end{gather*}
Hence $\langle A \varphi,\varphi \rangle \geq \langle V^*P_W V \varphi ,\varphi \rangle$ when $P_WV\varphi \not= 0$.
Since $\langle A \varphi,\varphi \rangle \geq 0$ by setting $\lambda =0$ in (\ref{alambda}), we have $\langle A\varphi, \varphi\rangle \geq 0= \langle V^*P_W V \varphi ,\varphi \rangle$ when $P_W P\varphi =0$. Therefore, $A\geq V^*P_WV.$ Equation (\ref{factorau}) is obvious.
\end{proof}

Now we are ready to prove the implication (iii)$\to$(i). 

First we note that by Lemma \ref{lemma_atf} a bounded operator $X$ on $l^2(\dN_0)$ belongs to $\pi_0(\cA)$ if and only if it has a matrix representation
\begin{gather}\label{eq_repX}
X=\left(
\begin{array}{lllllll}
x_{00} & x_{01} & \dots & x_{0,n} & 0 & \dots & \dots \\ 
x_{10} & x_{11} & \dots & x_{1,n} & x_{-n} & 0 & \dots \\ 
× & × & \ddots & × & \vdots & x_{-n} & \ddots \\ 
x_{n,0} & x_{n,1} & \dots & x_{n,n} & x_{-1} & \vdots & \ddots \\ 
0 & x_n & \dots & x_1 & x_0 & x_{-1} & × \\ 
\vdots & 0 & x_n & \dots & x_1 & x_0 & \ddots \\ 
\vdots & \vdots & \ddots & \ddots & × & \ddots & \ddots
\end{array}\right)
\end{gather}
with respect to the standard basis $\{e_k\}$ of $l^2(\dN_0).$ (If $X=F +T_p$ with $F\in \cF$ and $p\in \cP$, by adding zeros we can find a common $n$ such that $p=\sum_{k{=}-n}^n a_kz^k$ and $F$ has the size  $(n{+}1) \times  (n{+}1)$.) For simplicity we use the same notation for operators and the corresponding matrices.

Suppose that $x{=}x^* \in \cA$ and $X:=\pi_0(x)\geq 0$. Let (\ref{eq_repX}) be the matrix  of $X$. Since $X$ is symmetric, we have $x_{ij}=\overline{x_{ji}}$ and $x_k=\overline{x_{-k}}$ for all $i,j,k.$ We will prove that there is a lower-triangular matrix
\begin{gather}\label{eq_repY}
Y=\left(
\begin{array}{lllllll}
y_{00} 	&	 0 &		&  	  	&  	&  	& × \\ 
y_{10} 	& y_{11}  &  0		&  	  	&  	&  	& × \\ 
\vdots 	& \vdots  & \ddots 	&	\ddots	&  	&  	&   \\ 
y_{n,0} & y_{n,1} & \dots	& y_{n,n} 	& 0 	&  	&   \\ 
0 	& y_n 	  & \dots	& y_1 		& y_0 	& 0 	& × \\ 
\vdots 	& 0 	  & y_n		& \dots		& y_1 	& y_0 	& 0  \\ 
× 	& \vdots  & \ddots	& \ddots	& × 	& \ddots& \ddots 
\end{array}\right)
\end{gather}

\noindent such that $X=Y^*Y$. Since the matrix (\ref{eq_repY}) of $Y$ is also of the form (\ref{eq_repX}), we conlude that $Y\in\cX$.

Let $P_n$ be the projection of $l^2(\dN_0)$ onto the linear span of $e_0,\dots,e_n.$ Then we write $X$ and $Y$ as block matrices
\begin{gather}\label{eq_Y=UVW}
X=\left(\begin{array}{ll}
A & B \\ 
B^* & C
\end{array}\right)\ \mbox{and}\ 
Y=\left(\begin{array}{ll}
U & 0 \\ 
V & W 
\end{array}\right)
\end{gather} 
where the blocks $A,B,C$ and $U,V,W$ correspond to the matrices of $P_nXP_n$, $ (I-P_n)XP_n$, $(I-P_n)X(I-P_n)$ and $P_nYP_n$, $(I-P_n)YP_n$, $(I-P_n)Y(I-P_n)$, respectively.

Define $p(z){:=}\sum_{k=-n}^n x_kz^k\in \cP$. If $p\equiv 0$, then $X$ has the positive semi-definite matrix $A$ in the left-upper corner and zeros elsewhere. By the Cholesky UL-decomposition (see e.g. \cite{hous}, p.13) we have $A=U^*U$ for some lower-triangular matrix $U.$ Putting $V=0,W=0$ in (\ref{eq_Y=UVW}) the assertion is proven in this case. 

Assume now that $p\not\equiv 0$. The assumption $X\geq 0$ implies that $C=(I-P_n)X(I-P_n)\geq 0$. By (\ref{eq_repX}), $C$ 
has the same matrix as the Toeplitz operator $T_p$, so that $T_p \geq 0$. Since nonnegative Toeplitz operators have nonnegative symbols (see e.g. \cite{do}, 7.19), it follows that $p(z)\geq 0$ for all $z \in \dT.$ Therefore, by the Fej\'er-Riesz theorem there is a polynomial $q(z)=\sum_{k=0}^n y_kz^k \in \cP$ such that $p=\ov{q}q$, $q(z)\not= 0$ for $|z|<1$ and $q(0)>0.$ (Note the the degrees of $p$ and $q$ may be smaller than $n$.)

Having the polynomial $q(z)=\sum_{k=0}^n y_kz^k,$ we define $V$ and $W$ as above (see (\ref{eq_repY})). Then $W^*W=T_{\ov{q}}T_q=T_p$. A direct computation shows that $V^*W=B.$ Since $q(z)\not= 0$ for $|z|<1$, it follows from Lemma \ref{randense} that $\Ran T_q \equiv\Ran W$ is dense in the corresponding Hilbert space $(I-P_n)H^2(\dT)$, so that $P_W=I$. Therefore, $A\geq V^*V=V^*P_WV$ by Lemma \ref{lemma_ABC}. Applying the Cholesky UL-decomposition to the finite positive semi-definite matrix $A-V^*V$, it follows that there is a lower-triangular matrix $U$ such that $A-V^*V=U^*U$. Then we have $X=Y^*Y.$

Since $Y\in\cX,$ there is a $y\in\cA$ such that $Y=\pi_0(y).$ Then we have $\pi_0(x)=X=Y^*Y=\pi_0(y)^*\pi_0(y)=\pi_0(y^*y).$ Since the representation $\pi_0$ of $\cA$ is faithful, it follows that $x=y^*y$ and statement (i) is proven.

\section{Concluding Remarks}

1. For any finite positive semi-definite complex matrix there are an LU-decomposition and an UL-decomposition, see \cite{golub},\cite{hous}. The UL-decomposition was used in the above proof. It might be worth to emphasize that a direct generalization of the LU-decomposition algorithm would not work to prove our result. For let 
\begin{gather*}
Y_1{=}\left(
\begin{array}{cccccc}
  3 	&\ 2 	&   	&   	&   	&   \\
  1 	&\ 1 	&\ 0 	&   	&   	&   \\
  0  	&\ 1 	&\ 1 	& 0 	&   	&   \\
    	&\ 0  	&\ 1 	& 1 	& 0 	&   \\
    	&   	&\ 0  	& 1 	& 1 	& \ddots \\
    	&   	&   	&\ddots &\ddots & \ddots
\end{array}\right),\ 
X{=}\left(
\begin{array}{cccccc}
  10 	&\ 7 	&\ 0  	&   	&   	&   	 \\
  7 	&\ 6 	&\ 1 	& 0  	&   	&   	 \\
  0  	&\ 1 	&\ 2 	& 1 	& 0  	&   	 \\
    	&\ 0  	&\ 1 	& 2 	& 1 	& \ddots \\
    	&   	&\ 0  	& 1 	& 2 	& \ddots \\
    	&   	&   	&\ddots &\ddots & \ddots
\end{array}
\right).
\end{gather*}
Then we have $X=Y_1^*Y_1$ and hence $X\geq 0$. By Lemma \ref{lemma_atf}, $Y_1$ and $X$ are in the $*$-algebra $\cX.$ Applying the row-by-row algorithm for the LU-decomposition (see e.g. \cite{golub}) to the infinite matrix $X$ we obtain 
$$
Y_2=\left(
\begin{array}{cccccc}
  \sqrt{10} & \frac{7}{\sqrt{10}} & 0  &   &   &   \\
  0 & \sqrt{\frac{11}{10}} & \sqrt{\frac{10}{11}} & 0  &   &   \\
    & 0 & \sqrt{\frac{12}{11}} & \sqrt{\frac{11}{12}} & 0  &   \\
    &   & 0 & \sqrt{\frac{13}{12}} & \sqrt{\frac{12}{13}} & \ddots  \\
    &   &   & 0 & \sqrt{\frac{14}{13}} & \ddots \\
    &   &   &   & \ddots & \ddots
\end{array}
\right)
$$
and $X=Y_2^*Y_2^{}.$ But Lemma \ref{lemma_atf} implies that $Y_2$ is not in $\cX$! It was crucial for our result  to get a factorization {\it inside} the $*$-algebra $\cX$. However the UL-construction  yields the decomposition $X=Y^*Y,$ where 
$$
Y=\left(\begin{array}{cccccc}
  \frac{1}{\sqrt{5}} & 0 &   &   &   &   \\
  \frac{7}{\sqrt{5}} & \sqrt{5} & 0 &   &   &   \\
   0 & 1 & 1 &\ 0 &   &   \\
     & 0 & 1 &\ 1 & 0 &  \\
     &   & 0 &\ 1 & 1 & \ddots \\
     &   &   & \ddots  & \ddots & \ddots
\end{array}
\right)\in \cX.
$$

2. The main result of \cite{hmp} implies that each nonnegative $X\in \cX$ is a {\it finite sum} of hermitian squares $Y^*Y$ in the $*$-algebra $\cX$. Our theorem says that such an $X$ is a {\it single} hermitian square. Results of this kind can be interpreted in the context of noncommutative real algebraic geometry \cite{s}.

3. There is the following operator version of our main result:\\
{\it Let $\cH\not= \{0\}$ be a Hilbert space and let $\cX$ denote the set of all bounded operators on the Hilbert space $\cK=\oplus_{k=0}^\infty \cH_k$, where $\cH_k{=}\cH$, which are given by operator block matrices of the form (\ref{eq_repX}) with entries $x_{ij}$ and $x_k$ from $\cB(\cH)$.  
Then, for any element $X=X^*\in \cX$ such that $X\geq 0$ on $\cK$ there is an element $Y\in \cX$ such that $X=Y^*Y.$ Moreover, $Y \in \cX$ can be chosen as a lower-triangular block matrix. }

A proof of this result can be given along the lines of the proof in Section 2 with the following modifications. 
All bars of numbers are replaced by the adjoints of operators. The Hilbert space $\cK$ is identified with the Hardy space $H^2_\cH(\dT)$ of $\cH$-valued functions. Then the operator Laurent polynomial $p(z)=\sum_{k=-n}^n x_k z^k$
is nonnegative on $\cH$ for all $z \in \dT$. Therefore, by Rosenblum's operator Fej\'er-Riesz theorem (\cite{ro}, see e.g. \cite{dr} for a nice approach) there exists an operator-valued outer function $q(z)=\sum_{k=0}^n y_k z^k$, where $y_0,\dots,y_n \in \cB(\cH)$, such that $p(z)=q(z)^*q(z)$ for all $ z\in \dT$. That $q$ is outer means that there is a closed subspace $\cG$ of $\cH$ such that $H^2_\cG(\dT)$ is the closure of the range of the multiplication operator by $q$ on $H^2_\cH(\dT)$. If $P_\cG$ denotes the projection of $\cH$ onto $\cG$, then the projection $P_W$ from Lemma \ref{lemma_ABC} acts as $P_W(\varphi_n)=(P_\cG \varphi_n)$. Therefore, the operator matrix $Y$, defined by (\ref{eq_Y=UVW}) with $V$ replaced by $P_W V$, is in $\cX$. By Lemma \ref{lemma_ABC} the finite block matrix $A-V^*P_WV$ on the n-fold sum $\cH\oplus \cdots \oplus\cH$ is nonnegative. Hence there exists a lower triangular block matrix $U$ such that $A-V^*P_W V=U^*U$ (see \cite{ff}, Remark 7.5). Then we have $X=Y^*Y$ by (\ref{factorau}) which completes the proof.

\bibliographystyle{amsalpha}

\end{document}